\newtheorem{theorem}{Theorem}[section]
\newtheorem{lemma}[theorem]{Lemma}
\theoremstyle{definition}
\newcommand{\eps}{\varepsilon}
\title{The number of arcs in $\mathbb{F}_q^2$ of a given cardinality}
\author{
Rajko Nenadov\thanks{School of Computer Science, University of Auckland, New Zealand. Email: \texttt{rajko.nenadov@auckland.ac.nz}. Research supported by the Marsden Fund of the Royal Society of New Zealand.}
}
\date{}
\begin{document}

\maketitle

\begin{abstract}
A subset of $\mathbb{F}_q^2$ is called an arc if it does not contain three collinear points. We show that there are at most $\binom{(1 + o(1))q}{m}$ arcs of size $m \gg q^{1/2} (\log q)^{3/2}$, nearly matching a trivial lower bound $\binom{q}{m}$. This was previously known to hold for $m \gg q^{2/3} (\log q)^3$, due to Bhowmick and Roche-Newton. The lower bound on $m$ is best possible up to a logarithmic factor. 
\end{abstract}

\section{Introduction}

A set of points in $\mathbb{F}_q^2$ such that no three are collinear is called an \emph{arc}. A set $\{(x, x^2) \colon x \in \mathbb{F}_q\}$ forms an arc of size $q$ and, as a subset of an arc is also an arc, we have a trivial lower bound of $2^q$ on the number of arcs in $\mathbb{F}_q^2$. More generally, for each $m \le q$ there are at least $\binom{q}{m}$ arcs of size $m$. Bhowmick and Roche-Newton \cite{bhowmick22counting} showed that there are at most $2^{(1 + o(1))q}$ arcs, thus nearly matching the trivial lower bound. For $m \gg q^{2/3} (\log q)^3$, they proved that there are at most
$$
    \binom{(1 + o(1)) q}{m}
$$
arcs of size $m$. Chen, Liu, Nie, and Zeng \cite{chen2023random} extended this by showing that for $m \gg q^{1/2} (\log q)^2$, there are at most
$$
    \binom{Kq}{m}
$$
$m$-arcs, for some constant $K > 9$. We improve this by matching the bound of Bhowmick and Roche-Newton.

\begin{theorem} \label{thm:main}
    For every $\eps > 0$ there exists $C > 0$, such that the number of arcs in $\mathbb{F}_q^2$ of size $m \ge C q^{1/2} (\log q)^{3/2}$ is at most
    $$
        \binom{(1 + \eps)q}{m}.
    $$
\end{theorem}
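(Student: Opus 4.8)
The plan is to count arcs by the hypergraph container method. Consider the $3$-uniform hypergraph $\cH$ on the vertex set $\mathbb{F}_q^2$ whose edges are exactly the collinear triples; then the arcs are precisely the independent sets of $\cH$. This hypergraph has $N=q^2$ vertices, is nearly regular of degree $d=\Theta(q^3)$, every pair of vertices lies in exactly $q-2$ edges, and every triple of vertices in at most one edge. Since an arc in $\mathbb{F}_q^2$ has size at most $q+2$, and since the statement is vacuous for bounded $q$, we may assume $q$ large and restrict attention to $Cq^{1/2}(\log q)^{3/2}\le m\le q+2$; we may also assume $\eps\le 1$.

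The core of the argument is the construction of a family $\cC$ of \emph{containers} $C\subseteq\mathbb{F}_q^2$ with the properties that (i) every arc is contained in some $C\in\cC$, (ii) $|C|\le(1+\eps/2)q$ for every $C\in\cC$, and (iii) $\log|\cC|\le K_\eps\,q^{1/2}(\log q)^{3/2}$ for a constant $K_\eps$ depending only on $\eps$. Given such a family the theorem is immediate: the number of arcs of size $m$ is at most $\sum_{C\in\cC}\binom{|C|}{m}\le|\cC|\binom{(1+\eps/2)q}{m}$, while comparing the two binomial coefficients factor by factor (each factor being $\tfrac{(1+\eps)q-i}{(1+\eps/2)q-i}\ge\tfrac{1+\eps}{1+\eps/2}\ge 1+\tfrac{\eps}{4}$, using $i<m\le q+2<(1+\eps/2)q$ in the denominators) gives $\binom{(1+\eps)q}{m}/\binom{(1+\eps/2)q}{m}\ge(1+\eps/4)^m\ge e^{\eps m/8}$; choosing $C=C(\eps)$ large enough that $\eps C/8\ge K_\eps$ makes $|\cC|\le e^{\eps m/8}$ and finishes the proof.

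To build $\cC$ I would iterate the hypergraph container lemma (Balogh--Morris--Samotij / Saxton--Thomason). A single application to an induced subhypergraph $\cH[A]$ replaces $A$ by a bounded number of \emph{child} sets, each obtained by first fixing a small \emph{fingerprint} $S\subseteq A$ and then deleting a $\delta$-proportion of the edges of $\cH[A]$; the number of children is bounded by the number of admissible fingerprints, which is in turn controlled by a parameter $\tau$ required to dominate the normalised co-degrees of $\cH[A]$. Starting from $A=\mathbb{F}_q^2$, with $\Theta(q^5)$ edges, and iterating $\Theta(\log q)$ times, the surviving edge count of a branch drops below the \emph{supersaturation threshold} for its current vertex count, which forces that container to be small. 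The two substantive inputs are (a) a supersaturation estimate with a balanced refinement: every $A$ with $|A|\ge(1+\eps/2)q$ spans $\Omega_\eps(q^2)$ collinear triples, and more generally a suitable growing lower bound $\phi(|A|)$ holds, with the triples moreover choosable so as to form a subhypergraph of polylogarithmic maximum co-degree, which lets the lemma run with $\tau$ as small as roughly $q^{-3/2}$ up to logarithmic factors and hence with fingerprints of size about $\tau N=q^{1/2}\cdot\mathrm{polylog}(q)$; and (b) a bookkeeping argument showing that, summed over the $\Theta(\log q)$ rounds and weighted by the $\log N=\Theta(\log q)$ cost of recording each fingerprint, the total is $O_\eps\big(q^{1/2}(\log q)^{3/2}\big)$ --- for which one exploits that after the first rounds the induced hypergraphs shrink in vertex count while retaining good co-degree control, so that the per-round fingerprint sizes decay geometrically rather than each contributing a full additional $\log q$ factor.

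The main obstacle is the balanced supersaturation, and this is exactly where the gain over the $q^{2/3}(\log q)^3$ threshold of Bhowmick and Roche-Newton (and over the $Kq$ constant of Chen, Liu, Nie and Zeng) must originate: one needs that a point set only a $(1+\eps)$-factor larger than the maximum arc already forces $\Omega_\eps(q^2)$ collinear triples \emph{and} that such triples can be found spread out enough to feed the lemma with a near-optimal $\tau$, so that the very first application already compresses $\mathbb{F}_q^2$ essentially down to scale $q$. I would attack it by combining the classical bound that arcs have size at most $q+2$ with a deletion/stability argument --- were a set of size $(1+\eps)q$ to span too few collinear triples, removing one point from each triple would leave an arc far exceeding $q+2$ --- and then upgrade to the balanced form through a dyadic decomposition of the pairs of $A$ according to how many further points of $A$ lie on the line they span, estimating each dyadic class separately; controlling the heaviest such classes is the delicate point, and it is presumably this that pins down the final power of $\log q$.
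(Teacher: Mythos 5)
The reduction you set up at the start is fine: if one had containers $\cC$ with properties (i)--(iii), the comparison of binomial coefficients you give does finish the proof. But the construction of $\cC$ is exactly where the proposal stops being a proof. Your two ``substantive inputs'' (a) and (b) are asserted, not established, and they are precisely the known obstructions. For (a), plain supersaturation at size $(1+\eps)q$ is easy (each point of such a set lies in $\ge \eps q$ collinear triples), but the \emph{balanced} refinement is not: a set $A$ of size $(1+\eps)q$ may contain a full line, or many points concentrated on few lines, so its collinear triples can be concentrated on pairs of co-degree as large as $q$; you must extract, uniformly in $A$ and at every scale of the iteration (not just at $|A|\approx(1+\eps)q$ but for all $|A|$ between $q^2$ and $(1+\eps)q$), a subfamily of triples that is both large enough relative to $\phi(|A|)$ and has polylogarithmic co-degrees, and your deletion-plus-dyadic sketch explicitly defers ``the heaviest classes'', which is the whole difficulty. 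This is the very issue the literature ran into: with this route Bhowmick and Roche-Newton needed $m \gg q^{2/3}(\log q)^3$, and Chen, Liu, Nie and Zeng reached $m \gg q^{1/2}(\log q)^2$ only at the price of containers of size $Kq$ with $K>9$ rather than $(1+\eps)q$. For (b), the claim that per-round fingerprint sizes ``decay geometrically'' so that the total cost is $O_\eps(q^{1/2}(\log q)^{3/2})$ is unsupported: the containers never shrink below roughly $q$ vertices, the admissible $\tau$ in each round is dictated by the co-degrees of the current induced hypergraph (which you have not controlled), and each fingerprint element costs $\Theta(\log q)$ bits, so without a concrete bound per round you cannot rule out losing an extra $\log q$ factor --- which is exactly the difference between your target and what is currently known by this method.

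For comparison, the paper avoids hypergraph containers and balanced supersaturation altogether. It fixes a set $F$ of size $f=(8q\log q/\eps)^{1/2}$ (playing the role of a single global fingerprint), builds the \emph{graph} on $\mathbb{F}_q^2\setminus F$ joining $x,y$ whenever they are collinear with some $z\in F$, and shows via the simple (unbalanced) supersaturation of Bhowmick and Roche-Newton that every vertex subset of size at least $(1+\eps)q$ induces edge density at least $\eps f/(8q)$; the graph-container lemma of Kohayakawa, Lee, R\"odl and Samotij then bounds the number of independent sets, and summing over the $\binom{q^2}{f}$ choices of $F$ costs only $e^{O(f\log q)}=e^{o(m)}$. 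Co-degree issues never arise because each edge of the graph records a pair together with a witness in the arc $F$ itself, and $F$ being an arc caps the multiplicity at two. If you want to rescue your outline, you would have to actually prove the balanced supersaturation with the stated co-degree and $\tau$ bounds and carry out the round-by-round accounting; as written, those steps are missing.
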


Using the probabilistic argument, Roche-Newton and Warren \cite{roche22arcs} showed that the number of arcs of size $m < q^{1/2}$ is at least
$$
    \binom{q^2}{m} e^{-C m^3 / q},
$$
for some absolute constant $C > 0$, which is significantly larger than $\binom{(1 + \eps)q}{m}$ for $m \ll q^{1/2}$ and $q$ sufficiently large. Therefore, the lower bound on $m$ in Theorem \ref{thm:main} is best possible up to a $O(\log^{3/2} q)$ factor.

All previous results \cite{bhowmick22counting,chen2023random,roche22arcs} on counting arcs are based on encoding triples of collinear points as hyperedges in a 3-uniform hypergraph over the vertex set $\mathbb{F}_q^2$, and then applying the machinery of hypergraph containers \cite{balogh15container,saxton15containers} to count the number of independent sets of certain size. The main difficulty in obtaining a bound of order $\binom{(1+\eps)q}{m}$ using this approach is that one needs a so-called \emph{balanced supersaturation} for induced subgraphs of size at least $(1 + \eps)q$. Briefly, one needs to show that in every such induced subgraph, there are sufficiently many hyperedges which are `nicely' distributed. We circumvent this by using Lemma \ref{lemma:graph_container} that counts independent sets in graphs which satisfy certain local density condition. The way we encode certain collinear triples using graphs, rather than hypergraphs, stems from the ideas used by Kohayakawa, Lee, R\"odl, and Samotij \cite{kohayakawa15sidon} to count Sidon sets of certain size (and their idea can be further traced to the seminal work of Kleitman and Winston \cite{kleitman82cycles}).

Related counting results about points in a general position in higher dimensions were studied by Mattheus and Van de Voorde \cite{mattheus2024upper}, also using hypergraph containers. In particular, they obtained a bound analogous to Theorem \ref{thm:main} for point sets in general position in $PG(3,q)$. The aforementioned issue of obtaining a balanced-supersaturation result does not apply here as the bounds on co-degrees coming from the properties of finite geometries suffice for their applications. The ideas presented in the present paper also apply to some of the problems considered in \cite{mattheus2024upper}, and other than perhaps improving some of their bounds by a logarithmic factor, simplify their argument and calculations.

\section{Preliminaries}

We use the following supersaturation result by Bhowmick and Roche-Newton \cite[Claim 2.6]{bhowmick22counting}.

\begin{lemma} \label{lemma:triples}
    Let $P \subseteq \mathbb{F}_q^2$ be a set of size $|P| \ge k(q+1) + x + 1$, for some $k \in \mathbb{N}$ and an integer $x$ satisfying $0 \le x < q+1$. Then each point in $P$ is contained in at least
    $$
        \binom{k}{2} (q+1) + k x
    $$
    collinear triples with all three points in $P$.
\end{lemma}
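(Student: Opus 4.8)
The plan is to fix a single point $p \in P$ and study how the remaining points of $P$ distribute among the $q+1$ lines of $\mathbb{F}_q^2$ through $p$. These lines partition $\mathbb{F}_q^2 \setminus \{p\}$, and each of them contains exactly $q-1$ points other than $p$. Let $a_1, \dots, a_{q+1}$ denote the numbers of points of $P \setminus \{p\}$ lying on these lines, so that $\sum_{i=1}^{q+1} a_i = |P| - 1 \ge k(q+1) + x$. Every collinear triple containing $p$ with all three points in $P$ lies on a unique line through $p$ and is thus obtained by choosing one of these lines and two of its $a_i$ marked points; hence the number of such triples is exactly $\sum_{i=1}^{q+1}\binom{a_i}{2}$, and it remains to bound this from below.

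I would then carry out two standard reductions using that $t \mapsto \binom{t}{2}$ is convex and nondecreasing on the nonnegative integers. First, decreasing any $a_i$ only decreases the sum, so it suffices to handle $\sum_i a_i = k(q+1) + x$ exactly. Second, for a fixed sum the quantity $\sum_i \binom{a_i}{2}$ is minimised by the most balanced integer vector, namely the one with $x$ coordinates equal to $k+1$ and the remaining $q+1-x$ coordinates equal to $k$: a smoothing/majorisation step that replaces a pair $(a,b)$ with $a \ge b+2$ by $(a-1, b+1)$ strictly decreases the sum. Evaluating at this minimiser gives
$$\sum_{i=1}^{q+1} \binom{a_i}{2} \ge x\binom{k+1}{2} + (q+1-x)\binom{k}{2} = (q+1)\binom{k}{2} + x\left(\binom{k+1}{2} - \binom{k}{2}\right) = (q+1)\binom{k}{2} + kx,$$
which is precisely the claimed bound, and since $p \in P$ was arbitrary it holds for every point of $P$.

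I do not anticipate a real obstacle here; the only points that need a line of justification are the two monotonicity/convexity reductions and the bijective description of collinear triples through $p$. One minor check is that the balanced configuration is actually realisable, i.e. $k+1 \le q-1$, which follows from $|P| \le q^2$; and even if the upper bound $a_i \le q-1$ forced the extremal vector to be less balanced, this could only raise the minimum, so it does not affect the lower bound.
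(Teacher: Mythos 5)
Your proof is correct, and it is essentially the standard argument behind this statement (which the paper only cites, as Claim 2.6 of Bhowmick and Roche-Newton): partition $P \setminus \{p\}$ over the $q+1$ lines through a fixed point $p$, count the triples through $p$ as $\sum_i \binom{a_i}{2}$, and minimise via convexity/smoothing at the balanced distribution with $x$ lines carrying $k+1$ points and the rest carrying $k$. Your remark that the capacity constraint $a_i \le q-1$ can only raise the constrained minimum correctly disposes of the one potential subtlety, so no gap remains.
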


As remarked earlier, instead of hypergraph containers we use the following simpler result on the number of independent sets in locally dense graphs by Kohayakawa, Lee, R\"odl, and Samotij \cite{kohayakawa15sidon}.

\begin{lemma} \label{lemma:graph_container}
    Let $G$ be a graph on $N$ vertices, let $f$ be an integer, $0 \le \beta \le 1$, and $R$ be an integer such that
    $$
        R \ge e^{-\beta f} N.
    $$
    Suppose that for every $U \subseteq V(G)$ of size $|U| \ge R$, the number of edges in the induced subgraph $G[U]$ is at least
    $$
        e(U) \ge \beta \binom{|U|}{2}.
    $$
    Then, for any $r \ge 0$, the number of independent sets in $G$ of cardinality $f + r$ is at most
    $$
        \binom{N}{f} \binom{R}{r}.
    $$
\end{lemma}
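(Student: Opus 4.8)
The plan is to prove the lemma via the greedy Kleitman--Winston-type algorithm underlying \cite{kleitman82cycles,kohayakawa15sidon}: to each independent set we attach a small \emph{fingerprint} $S\subseteq I$ of size $f$ together with an \emph{available set} $A$ with $I\setminus S\subseteq A$ and $|A|\le R$, in such a way that $A$ depends only on $S$. Fix an arbitrary linear order on $V(G)$. Given an independent set $I$ with $|I|\ge f$, run the following process, maintaining a set $S$ of \emph{selected} vertices (initially $\emptyset$) and a set $A$ of \emph{available} vertices (initially $V(G)$). At each step let $u$ be the vertex of $A$ of maximum degree in $G[A]$, ties broken by the fixed order; if $u\in I$, add $u$ to $S$ and delete $u$ and all of its neighbours from $A$, and otherwise simply delete $u$ from $A$; stop once $|S|=f$. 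Since $I$ is independent, no vertex of $I$ is a neighbour of a selected vertex, and a deleted non-selected vertex is never in $I$; hence a vertex of $I$ leaves $A$ only upon being selected, so $I=S\cup(I\cap A)$ throughout. In particular $A\neq\emptyset$ whenever $|S|<f$, and each step deletes at least one vertex of $A$, so the process terminates after at most $N$ steps with $|S|=f$ and $I\setminus S\subseteq A$.

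\textbf{The final set $A$ is a function of $S$ alone.} At any step the examined vertex $u$ lies in $I$ if and only if it lies in the \emph{final} set $S$: if $u\in I$ it is selected at that very step (it is available and in $I$), so $u\in S$; and if $u\notin I$ it is deleted without ever entering $S$. Consequently, someone given only $S$ can replay the entire process step by step --- recomputing $u$ from the current $A$ and using the test ``$u\in S$'' in place of ``$u\in I$'' --- and a straightforward induction shows this replay agrees with the original run at every step, so it reconstructs the final $A$.

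\textbf{At termination $|A|\le R$.} As long as $|A|\ge R$, the hypothesis $e(G[A])\ge\beta\binom{|A|}{2}$ gives average degree at least $\beta(|A|-1)$ in $G[A]$, hence maximum degree at least $\beta(|A|-1)$; so a selection step replaces $|A|$ by at most $(|A|-1)(1-\beta)\le(1-\beta)|A|$, while a deletion-only step never increases $|A|$. Therefore, if all $f$ selection steps occur while $|A|\ge R$, the final size is at most $(1-\beta)^fN\le e^{-\beta f}N\le R$; and if some selection step occurs while $|A|<R$, then $|A|$, being non-increasing, stays below $R$ afterwards. Either way the final $|A|$ is at most $R$.

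It remains to count. Fix $r\ge 0$ and consider an independent set $I$ with $|I|=f+r$; it is recovered from the pair $(S,\,I\setminus S)$. There are at most $\binom{N}{f}$ choices for $S$, and once $S$ is fixed the set $A=A(S)$ is determined (first bold claim), has size at most $R$ (second bold claim), and contains the $r$-set $I\setminus S$, giving at most $\binom{R}{r}$ further choices. Hence there are at most $\binom{N}{f}\binom{R}{r}$ independent sets of size $f+r$, as claimed. I expect the only delicate point to be the first bold claim --- that the run, and hence $A$, is reconstructible from $S$ without access to $I$ --- which hinges on the observation that a vertex examined by the algorithm lies in $I$ precisely when it lies in $S$, itself a consequence of unselected vertices of $I$ never being removed from $A$; the remaining ingredients are the termination count and the geometric decay of $|A|$ under the local density hypothesis.
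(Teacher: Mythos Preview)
The paper does not prove this lemma; it is quoted from Kohayakawa, Lee, R\"odl, and Samotij \cite{kohayakawa15sidon} and used as a black box. Your proposal is precisely the standard Kleitman--Winston argument that underlies that reference, and it is correct: the invariants $S\subseteq I$ and $I\setminus S\subseteq A$ are maintained, the reconstruction of $A$ from $S$ is justified by the observation that an examined vertex lies in $I$ iff it ends up in the final $S$, and the geometric decay $|A|\mapsto (|A|-1)(1-\beta)$ at each selection step together with $(1-\beta)^f\le e^{-\beta f}$ yields $|A|\le R$ at termination. The final injection $I\mapsto (S,I\setminus S)$ gives the count $\binom{N}{f}\binom{R}{r}$.
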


\section{Proof of Theorem \ref{thm:main}}

Using Lemma \ref{lemma:triples}, we show that graphs encoding certain collinear triples  enjoy the local density property stated in Lemma \ref{lemma:graph_container}.

\begin{lemma} \label{lemma:graph}
    Let $F, P \subseteq \mathbb{F}_q^2$ be disjoint subsets such that $F$ is an arc and $|P| \ge (1 + \eps)q$, for some $\eps > 0$ and $q$ sufficiently large. Form the graph $G$ on the vertex set $P$ where $x, y \in P$ are connected by an edge if there is some $z \in F$ such that $(x,y,z)$ are collinear. Then
    $$
        e(G) \ge \frac{\eps |F|}{8 q} \binom{|P|}{2}.
    $$
\end{lemma}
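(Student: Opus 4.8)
The plan is to estimate $e(G)$ by double counting, over all $z \in F$, the pairs of points of $P$ that are collinear with $z$. Fix $z \in F$ and let $T_z$ be the number of unordered pairs $\{a,b\} \subseteq P$ with $a,b,z$ collinear; by the definition of $G$, every such pair is an edge. Since $F$ and $P$ are disjoint we have $z \notin P$, so Lemma~\ref{lemma:triples} applies to $P \cup \{z\}$: writing $|P| = k(q+1) + x$ with $k = \lfloor |P|/(q+1) \rfloor$ and $0 \le x \le q$ (here $k \ge 1$ since $|P| \ge (1+\eps)q > q+1$ for $q$ large), we have $|P \cup \{z\}| = k(q+1) + x + 1$, and the lemma gives that $z$ lies in at least $\binom{k}{2}(q+1) + kx$ collinear triples inside $P \cup \{z\}$. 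The two points of such a triple other than $z$ lie in $P$, so $T_z \ge \binom{k}{2}(q+1) + kx$. Expanding and using $|P| = k(q+1)+x$ with $0 \le x \le q$, one checks that
$$
\binom{k}{2}(q+1) + kx \ \ge\ \frac{|P|(|P| - q - 1)}{2(q+1)}
$$
(this is just the Cauchy--Schwarz bound $T_z = \sum_i \binom{a_i}{2} \ge (q+1)\binom{|P|/(q+1)}{2}$, where $a_i$ counts the points of $P$ on the $i$-th of the $q+1$ lines through $z$, these lines partitioning $P$).

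Next I would sum over $z$. Writing $\ell(a,b)$ for the line through $a$ and $b$,
$$
\sum_{z \in F} T_z \ =\ \sum_{\{a,b\} \subseteq P} |F \cap \ell(a,b)|.
$$
A pair $\{a,b\}$ is an edge of $G$ exactly when $|F \cap \ell(a,b)| \ge 1$, while $|F \cap \ell(a,b)| \le 2$ always, because $F$ is an arc; hence the right-hand side is at most $2\,e(G)$. Combining with the per-$z$ bound gives $e(G) \ge \frac{|F|\,|P|(|P|-q-1)}{4(q+1)}$. It then remains to verify that, for $\eps$ bounded (it suffices to treat $\eps \le 1$, the only case needed for Theorem~\ref{thm:main}) and $q$ large, this is at least $\frac{\eps|F|}{8q}\binom{|P|}{2}$: after cancelling $|F||P|$ this reduces to $4q(|P|-q-1) \ge \eps(q+1)(|P|-1)$, which follows from $|P| \ge (1+\eps)q$ since then $|P|-q-1 \ge \frac{\eps}{1+\eps}(|P|-1) - O(1)$ while $\frac{4q}{1+\eps} \ge q+1$ for $q$ large.

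I expect the only conceptual point to be realizing that the arc hypothesis on $F$ enters in exactly one place — the uniform bound $|F \cap \ell| \le 2$ that stops each edge from being over-counted more than twice in the double count; everything else is the standard supersaturation for collinear triples (Lemma~\ref{lemma:triples}) plus convexity. The fiddly part is the final constant bookkeeping, together with the (harmless) observation that the displayed inequality can only hold for $\eps$ bounded by a constant: if $\eps$ were of order $q$ then $P$ could be almost all of $\mathbb{F}_q^2$ while $|F|$ is a constant, making $e(G)$ far too small for the claimed bound — so implicitly the lemma is meant for bounded $\eps$, which is all Theorem~\ref{thm:main} needs.
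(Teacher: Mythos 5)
Your proof is correct and is essentially the paper's argument: for each $z \in F$ you apply Lemma \ref{lemma:triples} to $P \cup \{z\}$ to lower-bound the number of pairs of $P$ collinear with $z$, sum over $z \in F$, and divide by two because a line meets the arc $F$ in at most two points, the only difference being your smoothed convexity bound $|P|(|P|-q-1)/(2(q+1))$ in place of the paper's case split on $t = \lfloor |P|/(q+1)\rfloor$. Your remark that the claimed inequality requires $\eps$ to be bounded (say $\eps \le 1$) is accurate --- the paper's proof implicitly assumes this too --- and is harmless, since Theorem \ref{thm:main} only needs small $\eps$.
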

\begin{proof}
    Set $t = \lfloor |P| / (q+1) \rfloor$. Choose a vertex $v \in F$ and set $P' = P \cup \{v\}$. By Lemma \ref{lemma:triples}, if $t \ge 2$ then the point $v$ is contained in at least
    $$
        \binom{t}{2} (q+1) \ge \frac{|P|}{8q}
    $$
    collinear triples in $P'$. If $t = 1$ then $|P| < 2(q+1)$ thus it is contained in at least
    $$
        |P| - q \ge \eps q \ge \frac{\eps |P|^2}{8q}
    $$
    collinear triples, with room to spare. In either case, $v$ contributes at least $\eps |P|^2 / (8q)$ edges in $G$. As $F$ does not contain collinear triples, each edge in $G$ is associated to  at most two points in $F$. This implies
    $$
        e(G) \ge \frac{1}{2} \cdot |F| \cdot \frac{\eps |P|^2}{8 q} \ge \frac{\eps |F|}{8 q} \binom{|P|}{2}.
    $$
\end{proof}

The proof of Theorem \ref{thm:main} now follows by combining Lemma \ref{lemma:graph} with Lemma \ref{lemma:graph_container}.

\begin{proof}[Proof of Theorem \ref{thm:main}]
    For each $F \subseteq \mathbb{F}_q^2$ of size $|F| = f := (8q \log q / \eps)^{1/2} $, we count the number of arcs which contain $F$. First, form the graph $G$ on $\mathbb{F}_q^2 \setminus F$ as described in Lemma \ref{lemma:graph}. Note that if $I \subseteq \mathbb{F}_q^2$, $F \subseteq I$, is an arc then $I \setminus F$ is an independent set in $G$. By Lemma \ref{lemma:graph_container} applied with $\beta = \eps f / (8q)$, $f$, and $R = (1 + \eps)q$, the number of independent sets in $G$ of size $r = m - f$ is upper bounded by
    $$
        \binom{q^2 - f}{f} \binom{(1 + \eps)q}{m - 2f}.
    $$
    Taking into account all possible choices for $F$, we get the following bound on the number of arcs of size $m \ge 2f$: 
    $$
        \binom{q^2}{f} \binom{q^2 - f}{f} \binom{(1 + \eps)q}{m - 2f} \le e^{4 f \log q} m^{2f} 
 \binom{(1 + \eps)q}{m} \le e^{6 f \log q} \binom{(1 + \eps)q}{m}.
    $$
    For $m \ge C q^{1/2} (\log q)^{3/2}$ we have $6 f \log q < \alpha m$, where $\alpha \rightarrow 0$ as $C \rightarrow \infty$, thus we further upper bound the last expression as follows:
    $$
        e^{\alpha m} \binom{(1 + \eps)q}{m} \le (1 + 2\alpha)^m \binom{(1 + \eps)q}{m} \le \binom{(1 + 2\eps) q}{m}.
    $$
    In the last inequality we assume that $\alpha$ is sufficiently small with respect to $\eps$ (e.g.\ $\alpha < \eps/4$ suffices).
\end{proof}

\bibliographystyle{abbrv}
\bibliography{references}

\end{document}